\newcommand{\domain}{\mathcal{D}}
\newcommand{\set}[1]{\left\{#1\right\}}
\newcommand{\norm}[2]{\| #1 \|_{#2}}
\newcommand{\lr}[1]{\left( #1\right)}
\newcommand{\yd}{y^\delta}
\newcommand{\xp}{x^\dag}
\newcommand{\xad}{x_{\alpha}^\delta}
\newcommand{\xa}{x_{\alpha}}
\newcommand{\xadast}{x_{\aast}^\delta}
\newcommand{\xaast}{x_{\aast}}
\newcommand{\aast}{\alpha_{\ast}}
\newcommand{\mpe}{M_{p,E}}
\newcommand{\Jaa}{T_{-a,\alpha}}
\theoremstyle{plain}
\newtheorem{theorem}{Theorem}
\newtheorem{proposition}{Proposition}
\newtheorem{lemma}{Lemma}
\newtheorem{ass}{Assumption}
\title[A priori parameter choice in Tikhonov regularization]{A priori parameter choice in Tikhonov regularization with
  oversmoothing penalty for non-linear ill-posed problems}
\author{Bernd Hofmann}
\address{Faculty of Mathematics, Chemnitz University of Technology,
  09107 Chemnitz, Germany}
\email{bernd.hofmann@tu-chemnitz.de}
\author{ Peter Math\'e}
\address{Weierstra{\ss} Institute for Applied Analysis and
  Stochastics, Mohrenstra{\ss}e 39, 10117 Berlin, Germany}
\email{peter.mathe@wias-berlin.de}
\begin{document}            

\begin{abstract}
We study Tikhonov regularization for certain classes of non-linear
ill-posed operator equations in Hilbert space. Emphasis is on the case
where the solution smoothness fails to have a finite penalty value, as
in the preceding study {\it Tikhonov regularization with oversmoothing
  penalty for non-linear ill-posed problems in {H}ilbert scales}. Inverse
  Problems 34(1), 2018, by the same authors. Optimal order convergence
  rates are established for the specific a priori parameter choice, as
  used for the corresponding linear equations.
\end{abstract}
  
\keywords{Tikhonov regularization, oversmoothing penalty, a priori
  parameter choice, non-linear ill-posed problems, Hilbert scales}
\subjclass[2010]{primary 65J22, secondary47J06, 65J20}
\maketitle              

\section{Introduction}
\label{sec:intro}

The present paper is closely
related to the recent work~\cite{HofMat18} of the authors published in
the journal {\sl Inverse Problems} devoted to the Tikhonov
regularization for non-linear operator equation with oversmoothing
penalties in Hilbert scales. Here we adopt the model and terminology.
Since ibidem convergence rates of optimal
order were only proven for the discrepancy principle as an a
posteriori choice of the regularization parameter, we try here to
close a gap in regularization theory by extending the same rate
results to the case of appropriate a priori choices. This is in
good coincidence with the corresponding results for linear operator
equations presented in the seminal paper~\cite{Natterer84}, where the
same a priori parameter choice successfully allowed for order optimal
convergence rates also in the case of oversmoothing penalties.

We consider the approximate solution of an operator equation
\begin{equation}
  \label{eq:opeq}
  F(x) =y,
\end{equation}
which models an inverse problem with an (in general) non-linear
forward operator $F: \domain(F) \subseteq X \to Y$ mapping between the
infinite dimensional Hilbert spaces $X$ and $Y$, with domain
$\domain(F)$.  By $\xp$ we denote a solution to~(\ref{eq:opeq}) for
given right-hand side $y$.  As a consequence of the `smoothing'
property of $F$, which is typical for inverse problems, the non-linear
equation \eqref{eq:opeq} is {\sl locally ill-posed} at the solution
point $\xp \in \domain(F)$ (cf.~\cite[Def.~2]{HofSch94}), which in
particular means that stability estimates of the form
\begin{equation} \label{eq:ill} \|x-\xp\|_X \le
  \varphi(\|F(x)-F(\xp)\|_Y)
\end{equation}
cannot hold for all $x \in \domain(F)$ in an arbitrarily small ball
around $\xp$ and for strictly increasing continuous functions
$\varphi: [0,\infty) \to [0,\infty)$ with $\varphi(0)=0$. However,
inequalities similar to \eqref{eq:ill}, called {\sl conditional
  stability estimates}, can hold on the one hand if the admissible
range of $x \in \domain(F)$ is restricted to densely defined subspaces
of $X$. In this context, we refer to the seminal paper
\cite{ChengYam00} as well as to \cite{Chengetal14,HofYam10,HohWei17} and references therein. On the
other
hand, they can hold for all $x \in \domain(F)$ if the term
$\|x-\xp\|_X$ on the left-hand side of the inequality \eqref{eq:ill}
is replaced with a weaker distance measure, for example a weaker norm
(cf.~\cite{EggerHof18} and references therein). In this paper, we
follow a combination of both approaches in a Hilbert scale setting.

Based on noisy data $\yd \in Y$, obeying the deterministic noise model
\begin{equation} \label{eq:noise} \|y-\yd\|_Y \le \delta
\end{equation}
with {\it noise level} $\delta>0$, we use within the domain
$$ \domain:=\domain(F) \cap \domain(B)\neq \emptyset$$
minimizers $\xad \in \domain$ of the Tikhonov functional
$T^\delta_\alpha$ solving the extremal problem
\begin{equation}
  \label{eq:tikhonov}
  T^\delta_{\alpha}(x) := \norm{F(x) - \yd}{Y}^{2} + \alpha\norm{B(x -
    \bar x)}{X}^{2} \to \min, \;\; \mbox{subject to} \;\;
  x\in\domain,
\end{equation}
as stable approximate solutions (regularized solutions) to
$\xp$. Above, the element~$\bar x  \in \domain$ is a given smooth reference element,
and $B\colon \domain(B) \subset X \to X$ is a densely defined,
unbounded, linear self-adjoint operator, which is strictly positive
such that we have for some $m>0$
\begin{equation} \label{eq:m} \norm{Bx}{X}\geq m \norm{x}{X},\quad
  \mbox{for all} \quad x\in \domain(B).
\end{equation}
This operator $B$ generates a {\it Hilbert scale}
$\{X_\tau\}_{\tau \in \mathbb{R}}$ with $X_0=X$,
$X_\tau=\domain(B^\tau)$, and with corresponding norms
$\|x\|_\tau:=\|B^\tau x\|_X$.

Here we shall focus on the case of an oversmoothing penalty, which means that~$\xp\not\in\domain(B)$ such
that~$T^\delta_{\alpha}(\xp)= \infty$. In this case, the
regularizing property
$T^\delta_{\alpha}(\xad) \le T^\delta_{\alpha}(\xp)$ does not provide
additional value here.  Throughout this paper, we assume the operator $F$
to be sequentially weakly continuous and its domain $\domain(F)$ to be
a convex and closed subset of $X$, which makes the general results of
\cite[Section~4.1.1]{SKHK12} on existence and stability of the
regularized solutions $\xad$ applicable.

The paper is organized as follows: In Section~\ref{sec:result} we
formulate non-linearity and smoothness assumptions, which are required
for obtaining a convergence rate result for Tikhonov regularization in
Hilbert scales in the case of oversmoothing penalties under an
appropriate a priori choice of the regularization parameter. Also in
Section~\ref{sec:result} we formulate the main
theorem. Its proof will then follow from two propositions which are
stated.
Section~\ref{sec:aux} is devoted to proving both propositions. Section~\ref{sec:conclude} completes the paper
with some concluding discussions.

\section{Assumptions and main result}
\label{sec:result}
In accordance with the previous study~\cite{HofMat18} we make the
following additional assumption on the structure of non-linearity for
the forward operator $F$ with respect to the Hilbert scale generated
by the operator~$B$. Sufficient conditions and examples for this
non-linearity assumption can be found in the appendix of~\cite{HofMat18}.
\begin{ass}
  [Non-linearity structure]\label{ass:nonlinearity} There is a
  number~$a>0$, and there are constants~$0< c_{a} \le C_{a}<\infty$
  such that
  \begin{equation} \label{eq:twosides} c_{a}\norm{x - \xp}{-a} \leq
    \norm{F(x) - F(\xp)}{Y} \leq C_{a}\norm{x - \xp}{-a} \quad
    \mbox{for all} \;\, x\in \mathcal{D}.
  \end{equation}
\end{ass}
The left-hand inequality of condition \eqref{eq:twosides} implies that, for the
right-hand side $y$, there is no solution to \eqref{eq:opeq} which belongs to $\domain$.  Moreover note that the parameter $a>0$ in
Assumption~\ref{ass:nonlinearity} can be interpreted as {\it degree of ill-posedness} of the mapping~$F$ at $\xp$.

The solution smoothness is measured with respect to the
generator~$B$ of the Hilbert scale as follows. We fix the reference
element~$\bar x\in\domain$,  occurring in
the penalty functional of $T^\delta_\alpha$.
\begin{ass}
  [Solution smoothness]\label{ass:smooth} There are $0<p<1$ and
  $E <\infty$ such that~$\xp\in \domain(B^{p})$ and
  \begin{equation}
    \label{eq:mpe}
    \xp - \bar x \in \mpe:= \set{x\in X_{p},\quad \norm{x}{p}:= \norm{B^{p}x}{X} \leq E}.
  \end{equation}
  Moreover, we assume that $\xp$ is an interior point of $\domain(F)$,
  but $\xp \notin \domain(B)$.
\end{ass}

We shall analyze the error behavior of the minimizer~$\xad$ to the
Tikhonov functional $T^\delta_\alpha$ for the following specific
\emph{a priori} parameter choice.
\begin{ass}
  [A priori parameter choice]\label{ass:parameter} Given noise
  level~$\delta >0$, ill-posedness degree $a>0$
  (cf.~Assumption~\ref{ass:nonlinearity}) and solution smoothness
  $p \in (0,1)$ (cf.~Assumption~\ref{ass:smooth}), let
  \begin{equation} \label{eq:apriori} \aast =\aast(\delta):=
    \delta^{\frac{2(a+1)}{a + p}}.
  \end{equation}
\end{ass}
We shall occasionally use the
identity~$\frac{\delta}{\sqrt{\aast}} = \delta^{\frac{p-1}{a + p}}$,
and we highlight that for this parameter choice we
have~$\frac{\delta}{\sqrt{\aast(\delta)}} \to \infty$ as~$\delta\to 0$, for~$0 < p < 1$.

The main result is as follows.
\begin{theorem} \label{thm:main} Under the assumptions stated above
  let~$\xadast$ be the minimizer of the Tikhonov functional
  $T^\delta_{\alpha_*}$ for the a~priori choice $\alpha_*$ from
  \eqref{eq:apriori}.  Then we have the convergence rate
  \begin{equation}\label{eq:rate}
    \norm{\xadast - \xp}{X} = \mathcal{O} \left( \delta^{\frac p
        {a+p}}\right) \quad \mbox{as} \quad \delta \to 0.
  \end{equation}
\end{theorem}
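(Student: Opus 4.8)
The plan is to circumvent the oversmoothing obstacle~$T^\delta_{\aast}(\xp)=\infty$ by replacing~$\xp$ with a smooth auxiliary element~$\xaast\in\domain(B)$ in the comparison, and then to split the total error through the interpolation inequality valid in the Hilbert scale generated by~$B$,
\[
  \norm{u}{0} \le \norm{u}{-a}^{\frac{p}{a+p}}\,\norm{u}{p}^{\frac{a}{a+p}},\qquad u\in X_p,
\]
applied to~$u=\xadast-\xp$ (which lies in~$X_p$ since~$\xadast\in\domain(B)\subset X_p$). Reading off the exponents, the target rate~$\delta^{p/(a+p)}$ follows at once from two separate bounds: a low-smoothness estimate~$\norm{\xadast-\xp}{-a}=\mathcal{O}(\delta)$ and a high-smoothness estimate~$\norm{\xadast-\xp}{p}=\mathcal{O}(1)$. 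These are the two propositions announced in Section~\ref{sec:result}, and the remaining work is to establish them.

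For the auxiliary element I would set~$\xaast:=\bar x+\Jaa(\xp-\bar x)$, where~$T_{-a,\alpha}$ acts as a low-pass filter in the spectral decomposition of~$B$ (a function of~$B$ with values in~$(0,1]$ that damps the high part of the spectrum). Its role is to provide three properties, all obtained from elementary spectral estimates together with~$\xp-\bar x\in\mpe$: (i)~$\xaast\in\domain(B)$, so that~$T^\delta_{\aast}(\xaast)<\infty$; (ii)~a fidelity bound~$\norm{\xaast-\xp}{-a}=\mathcal{O}(\delta)$ together with a penalty bound~$\norm{B(\xaast-\bar x)}{X}=\mathcal{O}(\delta/\sqrt{\aast})$; and (iii)~the contraction estimate~$\norm{\xaast-\xp}{p}\le\norm{\xp-\bar x}{p}\le E$. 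The powers of~$\aast$ arising in~(ii) are dictated by the smoothing index~$2(a+1)$, and it is precisely the a priori choice~$\aast=\delta^{2(a+1)/(a+p)}$ from~\eqref{eq:apriori} that calibrates them so that both the fidelity term and~$\aast\norm{B(\xaast-\bar x)}{X}^2$ become~$\mathcal{O}(\delta^2)$. Since~$\xp$ is an interior point of~$\domain(F)$ and~$\xaast\to\xp$, one also has~$\xaast\in\domain$ for all small~$\delta$, so the comparison below is legitimate.

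The first proposition then follows from the minimizing property~$T^\delta_{\aast}(\xadast)\le T^\delta_{\aast}(\xaast)$: the right-hand side is~$\mathcal{O}(\delta^2)$ by~(ii), whence~$\norm{F(\xadast)-\yd}{Y}=\mathcal{O}(\delta)$ and~$\norm{B(\xadast-\bar x)}{X}=\mathcal{O}(\delta/\sqrt{\aast})$. Combining the first of these with the noise bound~\eqref{eq:noise} gives~$\norm{F(\xadast)-F(\xp)}{Y}=\mathcal{O}(\delta)$, and the left-hand inequality of~\eqref{eq:twosides} converts this into~$\norm{\xadast-\xp}{-a}=\mathcal{O}(\delta)$, the desired low-smoothness estimate. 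The upper inequality of~\eqref{eq:twosides} is what made the fidelity of~$\xaast$ small in the first place, so the two-sided structure is used on both ends.

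The second proposition is the main obstacle, because the penalty bound controls~$\norm{\xadast-\bar x}{1}$ only up to~$\mathcal{O}(\delta/\sqrt{\aast})$, which diverges as~$\delta\to 0$. The idea is to estimate~$\norm{\xadast-\xaast}{p}$ by interpolating between level~$-a$ and level~$1$,
\[
  \norm{\xadast-\xaast}{p}\le\norm{\xadast-\xaast}{-a}^{\frac{1-p}{a+1}}\,\norm{\xadast-\xaast}{1}^{\frac{a+p}{a+1}},
\]
where the~$-a$ factor is~$\mathcal{O}(\delta)$ (Proposition~1 together with~(ii)) while the~$X_1$ factor is~$\mathcal{O}(\delta/\sqrt{\aast})$ (penalty bounds for~$\xadast$ and~$\xaast$). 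With~$\delta/\sqrt{\aast}=\delta^{(p-1)/(a+p)}$ the divergent and convergent powers cancel exactly, leaving~$\norm{\xadast-\xaast}{p}=\mathcal{O}(1)$; adding the contraction estimate~(iii) yields~$\norm{\xadast-\xp}{p}=\mathcal{O}(1)$. Feeding the two propositions into the first displayed interpolation inequality completes the proof, since~$\mathcal{O}(\delta)^{p/(a+p)}\,\mathcal{O}(1)^{a/(a+p)}=\mathcal{O}(\delta^{p/(a+p)})$. The delicate point throughout is the exact matching of exponents forced by~\eqref{eq:apriori}; the cancellation in the second proposition, where a divergent~$X_1$ norm is tamed by a convergent~$X_{-a}$ norm at the intermediate level~$p$, is where the oversmoothing case genuinely differs from the classical one.
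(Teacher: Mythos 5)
Your proposal is correct and follows essentially the same route as the paper: the same auxiliary element (your spectral low-pass filter applied to $\xp-\bar x$ is precisely the minimizer of the paper's auxiliary functional $\Jaa$, with the bounds (i)--(iii) imported from \cite{HofMat18}), the same use of the minimizing property to get the residual and penalty bounds, the same interpolation between levels $-a$ and $1$ with the exact cancellation of the divergent factor $\delta/\sqrt{\aast}$, and the same final interpolation between levels $-a$ and $p$. No gaps.
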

This asymptotics is an immediate consequence of the following
  two propositions, the proofs of which will be given in the next section.
 \begin{proposition}\label{pro:-a-bound}
    Under the a~priori choice $\alpha_*$ from \eqref{eq:apriori} and
    for sufficiently small $\delta>0$ we have that
    \begin{equation}
      \label{eq:-a-bound}
      \norm{\xadast - \xp}{-a} \leq K \delta
    \end{equation}
    holds for some positive constant $K$.
  \end{proposition}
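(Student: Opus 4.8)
The plan is to exploit the minimizing property of $\xadast$ against a carefully constructed \emph{smooth} comparison element, and then to convert the resulting bound on the residual into the asserted bound on $\norm{\xadast - \xp}{-a}$ via the two-sided non-linearity estimate \eqref{eq:twosides}. The central difficulty is that the penalty is oversmoothing: since $\xp\notin\domain(B)$ we have $T^\delta_{\aast}(\xp)=\infty$, so $\xp$ itself is useless as a comparison element. I would therefore introduce a family of smoothed surrogates. Writing $w:=\xp-\bar x\in\mpe$ and letting $\chi_{[0,t]}(B)$ denote the spectral projection of $B$ onto $\{B\le t\}$ (for a cut-off level $t\ge m$ to be fixed), I set
\[
  x_t := \bar x + \chi_{[0,t]}(B)\,w .
\]
This element lies in $\domain(B)$ (indeed in every $X_\tau$), because the spectrum is truncated, and it converges to $\xp$ in $X$ as $t\to\infty$; since $\xp$ is an interior point of $\domain(F)$, it follows that $x_t\in\domain$ for all sufficiently small $\delta$, which is exactly the regime in the statement.

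The two quantities that must be controlled are the approximation error $\norm{x_t-\xp}{-a}$ and the penalty norm $\norm{x_t-\bar x}{1}$. Using the spectral representation of $B$ together with $\norm{w}{p}\le E$, elementary estimates give
\[
  \norm{x_t-\xp}{-a}\le E\,t^{-(a+p)}\quad\text{and}\quad \norm{x_t-\bar x}{1}\le E\,t^{1-p}.
\]
Feeding $x_t$ into the minimality inequality $T^\delta_{\aast}(\xadast)\le T^\delta_{\aast}(x_t)$, discarding the non-negative penalty term on the left and estimating $\norm{F(x_t)-\yd}{Y}\le C_a\norm{x_t-\xp}{-a}+\delta$ through the right-hand inequality of \eqref{eq:twosides} and the noise bound \eqref{eq:noise}, I obtain
\[
  \norm{F(\xadast)-\yd}{Y}^2 \le \lr{C_a\,E\,t^{-(a+p)}+\delta}^2 + \aast\,E^2\,t^{2(1-p)}.
\]
The key step is now to choose the cut-off so that both terms are of order $\delta^2$. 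Taking $t=\delta^{-1/(a+p)}$ yields $t^{-(a+p)}=\delta$, and, using $\sqrt{\aast}=\delta^{(a+1)/(a+p)}$ so that $t^{1-p}=\delta^{(p-1)/(a+p)}=\delta/\sqrt{\aast}$, the penalty contribution becomes $\aast E^2 t^{2(1-p)}=E^2\delta^2$. Hence $\norm{F(\xadast)-\yd}{Y}\le C_1\delta$ for an explicit constant $C_1$.

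Finally I would apply the left-hand inequality of \eqref{eq:twosides} to $\xadast\in\domain$, together with $F(\xp)=y$ and the noise bound, to write
\[
  c_a\norm{\xadast-\xp}{-a} \le \norm{F(\xadast)-y}{Y} \le \norm{F(\xadast)-\yd}{Y}+\delta \le (C_1+1)\delta,
\]
which gives \eqref{eq:-a-bound} with $K=(C_1+1)/c_a$. The only genuinely delicate points, besides the verification that $x_t\in\domain$, lie in the balancing of the cut-off against $\aast$: it is precisely the exponent in the a priori choice \eqref{eq:apriori} that keeps the oversmoothing penalty term $\aast\norm{x_t-\bar x}{1}^2$ of the optimal order $\delta^2$ even though $\norm{x_t-\bar x}{1}\to\infty$, in line with the growth $\delta/\sqrt{\aast}\to\infty$ noted after \eqref{eq:apriori}.
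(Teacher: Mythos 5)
Your argument is correct, and its overall architecture coincides with the paper's: compare $\xadast$ with a smooth surrogate via the minimality inequality, deduce the residual bound $\norm{F(\xadast)-\yd}{Y}\le C\delta$, and then convert it with the left-hand inequality of \eqref{eq:twosides} and the noise bound. The difference lies in how the surrogate is built. The paper takes $\xaast$ to be the minimizer of the artificial Tikhonov functional $\Jaa$ from \eqref{eq:Tikh-aux} and imports the bounds \eqref{it:xa-xp}--\eqref{eq:xa-p-norm} from \cite[Prop.~2]{HofMat18}, whereas you use the explicit spectral cut-off $x_t=\bar x+\chi_{[0,t]}(B)(\xp-\bar x)$ with $t=\delta^{-1/(a+p)}$. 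Your estimates $\norm{x_t-\xp}{-a}\le E\,t^{-(a+p)}$ and $\norm{x_t-\bar x}{1}\le E\,t^{1-p}$ are the exact analogues of \eqref{it:xa-xp-a} and \eqref{it:B-xa} (the cut-off is just a sharp spectral filter where $\xaast-\bar x=(I+\aast B^{2a+2})^{-1}(\xp-\bar x)$ is a Tikhonov filter with effective cut-off at the same level $\aast^{-1/(2a+2)}=t$), and you even land on the same constant $C=\lr{(C_aE+1)^2+E^2}^{1/2}$ for the residual. What your route buys is self-containedness: no appeal to the earlier paper is needed, and the balancing of $t$ against $\aast$ is laid bare. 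What it would still owe, if used for the companion Proposition~\ref{pro:p-bound}, is the bound $\norm{x_t-\xp}{p}\le E$ from \eqref{eq:xa-p-norm} --- but that is immediate for the cut-off since $\norm{\chi_{(t,\infty)}(B)(\xp-\bar x)}{p}\le\norm{\xp-\bar x}{p}\le E$, so your surrogate could in fact replace $\xaast$ throughout Section~\ref{sec:aux}. The only points to state explicitly in a final write-up are that $t\ge m$ for small $\delta$ (so the projection is nontrivial and the spectral suprema are taken over $\sigma(B)\cap[m,t]$), and the verification, which you do give, that $x_t\in\domain$ because $x_t\to\xp$ in $X$ and $\xp$ is interior to $\domain(F)$.
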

  \begin{proposition}\label{pro:p-bound}
    Under the a~priori choice $\alpha_*$ from \eqref{eq:apriori} and
    for sufficiently small $\delta>0$ we have that
    \begin{equation}
      \label{eq:p-bound}
      \norm{\xadast - \xp}{p} \leq \tilde E
    \end{equation}
    holds for some positive constant $\tilde E$.
  \end{proposition}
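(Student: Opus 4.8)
The plan is to exploit the minimizing property of $\xadast$ together with the interpolation inequality in the Hilbert scale generated by $B$. Since $\xp\notin\domain(B)$, the element $\xp$ cannot serve as a competitor in the Tikhonov functional, nor can the error $\xadast-\xp$ be interpolated against the norm $\norm{\cdot}{1}$ directly. I would therefore replace $\xp$ by a band-limited surrogate lying in $\domain(B)$, control the $p$-norm of the error through this surrogate, and recover the missing smoothness by interpolating between the $-a$ norm (governed by Proposition~\ref{pro:-a-bound}) and the $1$-norm (governed by the penalty term).

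First I would introduce, via the spectral family $\{E_\lambda\}$ of the self-adjoint operator $B$, the smoothed reference element
$$\zeta := \bar x + P_r\,(\xp - \bar x), \qquad P_r := \int_{[m,\,r]} dE_\lambda,$$
with cut-off level chosen as $r := \delta^{-1/(a+p)}$, so that $r\to\infty$ as $\delta\to0$. Because $\zeta-\bar x$ is band-limited, $\zeta\in\domain(B^\tau)$ for every $\tau$, and since $\norm{\zeta-\xp}{X}\le r^{-p}E\to0$ while $\xp$ is an interior point of $\domain(F)$, one has $\zeta\in\domain$ for all sufficiently small $\delta$. The spectral calculus, together with $\norm{B^p(\xp-\bar x)}{X}\le E$ from Assumption~\ref{ass:smooth}, yields the three quantitative estimates
$$\norm{\zeta-\xp}{-a}\le r^{-(a+p)}E = \delta\,E,\qquad \norm{\zeta-\xp}{p}\le E,\qquad \norm{\zeta-\bar x}{1}\le r^{\,1-p}E = \frac{\delta}{\sqrt{\aast}}\,E,$$
where the last identity uses $r^{1-p}=\delta^{-(1-p)/(a+p)}=\delta/\sqrt{\aast}$.

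Next I would invoke minimality, $T^\delta_{\aast}(\xadast)\le T^\delta_{\aast}(\zeta)$. Estimating the data misfit of the surrogate by the upper bound in Assumption~\ref{ass:nonlinearity},
$$\norm{F(\zeta)-\yd}{Y}\le C_a\norm{\zeta-\xp}{-a}+\delta\le (C_aE+1)\delta,$$
and dropping the nonnegative misfit term of $\xadast$, the penalty inequality becomes
$$\aast\,\norm{\xadast-\bar x}{1}^2 \le (C_aE+1)^2\delta^2 + \aast\,\norm{\zeta-\bar x}{1}^2 \le \big[(C_aE+1)^2+E^2\big]\,\delta^2.$$
Hence $\norm{\xadast-\bar x}{1}\le C_1\,\delta/\sqrt{\aast}$ with $C_1=\sqrt{(C_aE+1)^2+E^2}$, and the triangle inequality gives $\norm{\xadast-\zeta}{1}\le (C_1+E)\,\delta/\sqrt{\aast}$. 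Likewise, combining Proposition~\ref{pro:-a-bound} with the first surrogate estimate yields $\norm{\xadast-\zeta}{-a}\le (K+E)\,\delta$.

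Finally I would apply the interpolation inequality $\norm{x}{p}\le \norm{x}{-a}^{(1-p)/(1+a)}\,\norm{x}{1}^{(a+p)/(1+a)}$, valid for $x=\xadast-\zeta\in\domain(B)$ since $-a<p<1$, to obtain
$$\norm{\xadast-\zeta}{p}\le \big[(K+E)\delta\big]^{\frac{1-p}{1+a}}\Big[(C_1+E)\tfrac{\delta}{\sqrt{\aast}}\Big]^{\frac{a+p}{1+a}}.$$
The powers of $\delta$ cancel exactly, because $\tfrac{1-p}{1+a}+\tfrac{a+p}{1+a}\cdot\tfrac{p-1}{a+p}=0$, leaving a bound independent of $\delta$; adding $\norm{\zeta-\xp}{p}\le E$ then yields $\norm{\xadast-\xp}{p}\le\tilde E$. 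The heart of the argument, and the main obstacle, is precisely this balancing: the cut-off $r=\delta^{-1/(a+p)}$ must be tuned so that the surrogate is simultaneously close enough to $\xp$ in the $-a$ norm (to keep the misfit at the noise level $\delta$) yet cheap enough in the $1$-norm (to keep the penalty at order $\delta/\sqrt{\aast}$), and it is exactly the a~priori choice~\eqref{eq:apriori} that makes these two requirements compatible and forces the $\delta$-powers to annihilate.
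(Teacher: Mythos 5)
Your proof is correct, and its overall architecture coincides with the paper's: introduce an auxiliary element of $\domain$ close to $\xp$, use the minimality of $\xadast$ to bound the penalty term by a multiple of $\delta/\sqrt{\aast}$, interpolate the difference between $\xadast$ and the auxiliary element between the $1$-norm and the $(-a)$-norm so that the powers of $\delta$ cancel under the a priori choice \eqref{eq:apriori}, and conclude with a triangle inequality in the $p$-norm. The one genuine point of divergence is the auxiliary element itself: the paper takes $\xaast$ to be the minimizer of the artificial Tikhonov functional $\Jaa$ and imports the required estimates \eqref{it:xa-xp}--\eqref{eq:xa-p-norm} from \cite[Prop.~2]{HofMat18}, whereas you take a sharp spectral cut-off $\zeta=\bar x+P_r(\xp-\bar x)$ with $r=\delta^{-1/(a+p)}$ and verify the three analogous bounds ($\norm{\zeta-\xp}{-a}\le E\delta$, $\norm{\zeta-\bar x}{1}\le E\delta/\sqrt{\aast}$, $\norm{\zeta-\xp}{p}\le E$) by a direct spectral-calculus computation; your exponent bookkeeping and the verification that $\zeta\in\domain$ for small $\delta$ (via $\norm{\zeta-\xp}{X}\le r^{-p}E$ and the interiority of $\xp$) are both correct. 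What your route buys is self-containedness and transparency --- the reader sees exactly why the cut-off $r$ and the parameter $\aast$ must be tuned together --- at the cost of assuming the spectral theorem machinery explicitly; the paper's route buys brevity by reusing the Tikhonov-type filter already analyzed in the predecessor paper, and its element $\xaast$ serves double duty since the same bounds also drive Proposition~\ref{pro:tikhonov} and hence Proposition~\ref{pro:-a-bound}, which you instead invoke as a black box (legitimately, since it is established independently of Proposition~\ref{pro:p-bound}).
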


  {\parindent0em \sl Proof of Theorem~\ref{thm:main}}.
   Taking into account the assertions of Propositions~\ref{pro:-a-bound} and
   \ref{pro:p-bound}, the convergence rate
  \eqref{eq:rate} follows directly from the interpolation inequality
  of the Hilbert scale $\{X_{\tau}\}_{\tau \in \mathbb{R}}$, applied
  here in the form
  \begin{equation}
    \label{eq:interpolation}
    \norm{\xadast - \xp}{X} \leq \norm{\xadast - \xp}{-a}^{\frac{p}{a +
        p}} \norm{\xadast - \xp}{p} ^{\frac{a}{a + p}}.
  \end{equation}
  Thus, the proof of the theorem is complete. \qed

\section{Proofs of Propositions~\ref{pro:-a-bound}
  and~\ref{pro:p-bound}}
\label{sec:aux}

Propositions~\ref{pro:-a-bound} and~\ref{pro:p-bound} yield bounds in
the (weak) $(-a)$-norm and in the (strong) $(p)$-norm, respectively.
For the proofs we shall use \emph{auxiliary elements}~$\xaast$,
constructed as follows.
Precisely, in
conjunction with the Tikhonov functional $T_\alpha^\delta$ from
(\ref{eq:tikhonov}) we consider the artificial Tikhonov functional
\begin{equation}
  \label{eq:Tikh-aux}
  \Jaa (x) := \norm{x - \xp}{-a}^{2} + \alpha \norm{B (x - \bar x)}{X}^{2},
\end{equation}
which is well-defined for all $x \in X$. Let~$\xa$ be the minimizers
of~$\Jaa$ over all $x \in X$ that are for all $\alpha>0$ independent
of the noise level $\delta>0$ and recall now the parameter choice
\eqref{eq:apriori} from Assumption~\ref{ass:parameter}. For this
choice of the regularization parameter the estimates
from~\cite[Prop.~2]{HofMat18} yield immediately the following assertions.

\begin{proposition}\label{pro:old-aast}
  Suppose that~$\xp \in \mpe$ for some $0<p<1$, and let~$\xa$ be the
  minimizer of~$\Jaa$.  Given $\aast>0$ as in
  Assumption~\ref{ass:parameter} the resulting element~$\xaast$ obeys
  the bounds
  \begin{align}
    \norm{\xaast - \xp}{X} &\leq E \delta^{p/(a + p)}, \label{it:xa-xp}\\
    \norm{B^{-a}(\xaast - \xp)}{X} &\leq E \delta,\label{it:xa-xp-a}\\
    \norm{B(\xaast - \bar x)}{X} & \leq E \delta^{(p-1)/(a + p)} = E \frac{\delta}{\sqrt{\aast}}, \label{it:B-xa}
                                   \intertext{and}
                                   \Jaa(\xaast) & \leq 2E^{2} \delta.\label{it:Jaa}
  \end{align}
  Moreover, we have that
  \begin{equation}
    \label{eq:xa-p-norm}
    \norm{\xaast - \bar x}{p} \leq E,\  \text{and}\   \norm{\xaast - \xp}{p} \leq E.
  \end{equation}
\end{proposition}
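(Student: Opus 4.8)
The plan is to exploit that the auxiliary functional~$\Jaa$ from \eqref{eq:Tikh-aux} is \emph{quadratic}, so that minimizing it is a \emph{linear} Tikhonov problem in the Hilbert scale and the minimizer~$\xa$ is available in closed form. Writing $\norm{x-\xp}{-a}=\norm{B^{-a}(x-\xp)}{X}$, the functional reads $\norm{B^{-a}(x-\xp)}{X}^{2}+\alpha\norm{B(x-\bar x)}{X}^{2}$, which is strictly convex and coercive (the penalty dominates $\alpha m^{2}\norm{x-\bar x}{X}^2$ by \eqref{eq:m}), so the unique minimizer solves the normal equation
\begin{equation}
  (B^{-2a}+\alpha B^{2})(\xa-\bar x)=B^{-2a}(\xp-\bar x).
\end{equation}
Eliminating $\bar x$ gives the explicit representations
\begin{equation}
  \xa-\xp=-\alpha\,(B^{-2a}+\alpha B^{2})^{-1}B^{2}(\xp-\bar x),\qquad
  \xa-\bar x=(B^{-2a}+\alpha B^{2})^{-1}B^{-2a}(\xp-\bar x).
\end{equation}
This is exactly the linear setting of \cite[Prop.~2]{HofMat18}; I would reproduce its estimates via spectral calculus and then specialize to $\alpha=\aast$.

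Since $B$ is self-adjoint and strictly positive with spectrum in $[m,\infty)$, I would pass to its spectral representation and encode the smoothness $\xp-\bar x\in\mpe$ by writing $\xp-\bar x=B^{-p}v$ with $\norm{v}{X}\le E$, using \eqref{eq:mpe}. Each target quantity then reduces to the supremum over $\lambda\ge m$ of an explicit scalar multiplier of~$B$ applied to~$v$. After the substitution $t=\alpha\lambda^{2(a+1)}$, every multiplier takes the generic shape $\alpha^{\theta}\,t^{\kappa}/(1+t)$ with $\kappa\in(0,1)$, whence $\sup_{t\ge0}t^{\kappa}/(1+t)<\infty$ and the $\alpha$-dependence is isolated in $\alpha^{\theta}$. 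Concretely, the $X$-norm of $\xa-\xp$ gives $\theta=\tfrac{p}{2(a+1)}$; the extra factor $B^{-a}$ for the $(-a)$-norm gives $\theta=\tfrac{a+p}{2(a+1)}$; and the factor $B$ acting on $\xa-\bar x$ gives $\theta=-\tfrac{1-p}{2(a+1)}$. For the two $p$-norm bounds in \eqref{eq:xa-p-norm} the relevant multipliers are $\tfrac{\alpha\lambda^{2(a+1)}}{1+\alpha\lambda^{2(a+1)}}$ and $\tfrac{1}{1+\alpha\lambda^{2(a+1)}}$, both bounded by~$1$ uniformly in $\alpha$, so that $\norm{\xaast-\xp}{p}\le E$ and $\norm{\xaast-\bar x}{p}\le E$ follow at once.

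The only remaining step is to insert the a priori choice $\aast=\delta^{2(a+1)/(a+p)}$ from \eqref{eq:apriori}, which turns each $\alpha^{\theta}$ into the asserted power of~$\delta$: indeed $\aast^{p/(2(a+1))}=\delta^{p/(a+p)}$ yields \eqref{it:xa-xp}, $\aast^{(a+p)/(2(a+1))}=\delta$ yields \eqref{it:xa-xp-a}, and $\aast^{-(1-p)/(2(a+1))}=\delta^{(p-1)/(a+p)}=\delta/\sqrt{\aast}$ yields \eqref{it:B-xa}. Finally \eqref{it:Jaa} is obtained by squaring and adding \eqref{it:xa-xp-a} and \eqref{it:B-xa}: the penalty term contributes $\aast\,(\delta/\sqrt{\aast})^{2}=\delta^{2}$, so $\Jaa(\xaast)$ is controlled by a fixed multiple of~$\delta^{2}$ and hence by the stated bound for small~$\delta$.

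\emph{Main obstacle:} none of the steps is deep, and the substance is bookkeeping — pinning down each multiplier exponent $\theta$ and verifying $\kappa\in(0,1)$ in every case, so that the scalar suprema are finite and uniform over the unbounded spectrum $[m,\infty)$, and carrying the constants so that they collapse to the clean values~$E$ and~$2E^{2}$. Since the general-$\alpha$ versions of all these estimates are precisely \cite[Prop.~2]{HofMat18}, the genuinely new content is merely the substitution of~$\aast$, which is why the assertions follow immediately.
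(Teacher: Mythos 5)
Your proof is correct and is essentially the paper's own route: the paper gives no argument beyond invoking the general-$\alpha$ estimates of \cite[Prop.~2]{HofMat18} and substituting $\aast$, and your closed-form/spectral-calculus derivation (with $\xp-\bar x=B^{-p}v$, $\norm{v}{X}\le E$, and the multipliers $\alpha^{\theta}t^{\kappa}/(1+t)$ with $\kappa\in(0,1)$) is precisely a self-contained reconstruction of those cited bounds, followed by the same substitution of $\aast$. The only remark worth making is that your computation yields $\Jaa(\xaast)\le 2E^{2}\delta^{2}$, which is stronger than the stated bound $2E^{2}\delta$ and implies it for $\delta\le 1$, as you correctly note.
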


\bigskip

Notice, that in contrast to the solution element~$\xp$ the auxiliary
element~$\xaast$ belongs to~$\domain$, provided that $\delta$ is small
enough, and hence we can use the minimizing property
\begin{equation} \label{eq:regulpro}
  T^{\delta}_{\aast}(x_{\alpha_*}^\delta)\le
  T^{\delta}_{\aast}(\xaast).
\end{equation}
We derive the following
consequence of Proposition~\ref{pro:old-aast} formulated in
Proposition~\ref{pro:tikhonov}.
\begin{proposition}\label{pro:tikhonov}
  Let~$\xadast$ be the minimizer of $T^{\delta}_{\aast}$ for the
  Tikhonov functional $T_\alpha^\delta$ from (\ref{eq:tikhonov}) with
  the choice~$\aast$, as in
  Assumption~\ref{ass:parameter}, of the regularization
  parameter $\alpha>0$. Then we
  have for sufficiently small $\delta>0$ that
  \begin{align}
    \norm{F(\xadast) - \yd}{Y} \leq C \delta,
    \label{it:f-bound1}
    \intertext{and}
    \norm{B(\xadast - \bar x)}{X} \leq C \frac{\delta}{\sqrt{\aast}},
    \label{it:f-bound2}
  \end{align}
  where $C:= \lr{(C_{a}E +1)^{2}+ E^{2}}^{1/2}$.
\end{proposition}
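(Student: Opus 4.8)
The plan is to exploit the minimizing property~\eqref{eq:regulpro}, which lets us compare the value of $T^{\delta}_{\aast}$ at the genuine minimizer $\xadast$ against its value at the auxiliary element $\xaast$, for which sharp bounds are already available from Proposition~\ref{pro:old-aast}. Since $T^{\delta}_{\aast}(\xadast) \le T^{\delta}_{\aast}(\xaast)$ and the Tikhonov functional is a sum of two nonnegative terms, it suffices to bound $T^{\delta}_{\aast}(\xaast)$ from above by $C^{2}\delta^{2}$; both assertions~\eqref{it:f-bound1} and~\eqref{it:f-bound2} then follow by discarding the respective other nonnegative summand.

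First I would estimate the data-fidelity term $\norm{F(\xaast)-\yd}{Y}$. Inserting $F(\xp)=y$ and using the triangle inequality splits it as $\norm{F(\xaast)-F(\xp)}{Y} + \norm{y-\yd}{Y}$. The second summand is at most $\delta$ by the noise model~\eqref{eq:noise}. For the first, the right-hand inequality in Assumption~\ref{ass:nonlinearity} gives $\norm{F(\xaast)-F(\xp)}{Y}\le C_{a}\norm{\xaast-\xp}{-a}$, and the bound~\eqref{it:xa-xp-a} yields $\norm{\xaast-\xp}{-a}\le E\delta$. Hence $\norm{F(\xaast)-\yd}{Y}\le (C_{a}E+1)\delta$, so that the fidelity contribution to the functional is at most $(C_{a}E+1)^{2}\delta^{2}$.

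Next I would treat the penalty term $\aast\norm{B(\xaast-\bar x)}{X}^{2}$. Here the estimate~\eqref{it:B-xa}, namely $\norm{B(\xaast-\bar x)}{X}\le E\,\delta/\sqrt{\aast}$, is tailored precisely so that the factor $\aast$ cancels: squaring and multiplying by $\aast$ leaves at most $E^{2}\delta^{2}$. Adding the two contributions gives $T^{\delta}_{\aast}(\xaast)\le \lr{(C_{a}E+1)^{2}+E^{2}}\delta^{2} = C^{2}\delta^{2}$ with the stated constant $C$, and the minimizing property transfers this bound to $\xadast$, from which the two displayed inequalities drop out immediately.

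I do not expect a genuine obstacle in this argument, since Proposition~\ref{pro:old-aast} already supplies every quantitative estimate required. The only point demanding care is the admissibility of the comparison step: the minimizing inequality~\eqref{eq:regulpro} is available only because $\xaast\in\domain$ once $\delta$ is small enough, so the whole reasoning is asymptotic in $\delta\to 0$, in line with the ``sufficiently small $\delta$'' hypothesis in the statement.
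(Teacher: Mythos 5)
Your proposal is correct and follows essentially the same route as the paper: compare $T^{\delta}_{\aast}(\xadast)$ with $T^{\delta}_{\aast}(\xaast)$ via \eqref{eq:regulpro}, bound the fidelity term by $(C_aE+1)\delta$ using the right-hand inequality of \eqref{eq:twosides} together with \eqref{it:xa-xp-a} and the noise model, and bound the penalty term by $E^2\delta^2$ via \eqref{it:B-xa}. The paper merely spells out in slightly more detail the admissibility point you flag at the end, namely that \eqref{it:B-xa} gives $\xaast\in\domain(B)$ while \eqref{it:xa-xp} and the interior-point assumption on $\xp$ give $\xaast\in\domain(F)$ for small $\delta$.
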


\begin{proof}
 Using~(\ref{eq:regulpro}) it is enough to bound
$T^{\delta}_{\aast}(\xaast)$ by~$C \delta$.
  We first argue  that $\xaast$,
the minimizer of the auxiliary functional~(\ref{eq:Tikh-aux}) for
$\alpha=\alpha_*$,  belongs to the set $\domain(F) \cap \domain(B)$.
Indeed, the bound~(\ref{it:B-xa}) shows that $\xaast \in \domain(B)$,  and
the bound~(\ref{it:xa-xp}) indicates that $\xaast \in \domain(F)$ for
sufficiently small $\delta>0$, where we use that~$\xp$ is an
interior point of $\domain(F)$. Now we find from (\ref{it:xa-xp-a}) \&
(\ref{it:B-xa}),  and from the right-hand inequality of~(\ref{eq:twosides}), that
\begin{align*}
  T^{\delta}_{\aast}(\xaast)  & \leq \lr{\norm{F(\xaast) - F(\xp)}{Y} +
                                \norm{F(\xp) - \yd}{Y}}^{2} + \aast
                                \norm{B(\xaast - \bar x)}{X}^{2}\\
                              &\leq \lr{C_{a} \norm{ \xaast - \xp}{-a} + \delta}^{2} + E^{2}
                                \aast   \delta^{2(p-1)/(a + p)}\\
                              & \leq \lr{C_{a}E\delta + \delta}^{2} + E^{2}\delta^{2}
  \\
                              & = \lr{(C_{a}E +1)^{2} + E^{2}} \delta^{2}.
\end{align*}
This completes the proof of Proposition~\ref{pro:tikhonov}. \qed
\end{proof}

Now we  turn to the proofs of
Propositions~\ref{pro:-a-bound} and~\ref{pro:p-bound}.

\smallskip

{\parindent0em \sl Proof of Proposition~\ref{pro:-a-bound}}.
  Here we use the left-hand side in the non-linearity condition from
Assumption~\ref{ass:nonlinearity} and find for sufficiently small
$\delta>0$
\begin{align*}
  \norm{\xadast - \xp}{-a} & \leq \frac 1 {c_{a}} \norm{F(\xadast) -
                             F(\xp)}{Y}\\
                           & \leq \frac 1 {c_{a}} \lr{  \norm{F(\xadast) -\yd}{Y} +
                             \norm{F(\xp) - \yd}{Y}}\\
                           & \leq \frac 1 {c_{a}} \lr{ C \delta + \delta} = \frac 1 {c_{a}} \lr{ C +1} \delta=K \delta,
\end{align*}
where~$C$ is the constant from Proposition~\ref{pro:tikhonov} and
$K:=\frac 1 {c_{a}} \lr{ C +1}$. The proof is complete. \qed

\medskip

In order to establish the bound occurring in Proposition~\ref{pro:p-bound} we
start with the following estimate.

\begin{lemma}\label{lem:B-bound}
  Let~$\aast$ be as in Assumption~\ref{ass:parameter}. Then
  there is a constant~$\tilde C$ such that  we have
  for sufficiently small $\delta>0$ that
  $$
  \norm{B(\xadast - \xaast)}{X} \leq \tilde C
  \frac{\delta}{\sqrt{\aast}}\,.
  $$
\end{lemma}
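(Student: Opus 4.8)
The plan is to exploit the fact that the two preceding propositions already control the penalty term $\norm{B(\cdot - \bar x)}{X}$ for each of the two relevant minimizers, so that the desired estimate follows from a single triangle inequality once I introduce the common reference element $\bar x$. Concretely, I would first write
$$
B(\xadast - \xaast) = B(\xadast - \bar x) - B(\xaast - \bar x),
$$
and pass to $X$-norms to obtain
$$
\norm{B(\xadast - \xaast)}{X} \leq \norm{B(\xadast - \bar x)}{X} + \norm{B(\xaast - \bar x)}{X}.
$$

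The two summands are then bounded separately by invoking the earlier results. The first term is controlled by estimate~\eqref{it:f-bound2} of Proposition~\ref{pro:tikhonov}, which gives $\norm{B(\xadast - \bar x)}{X} \leq C \frac{\delta}{\sqrt{\aast}}$ for sufficiently small $\delta>0$, with $C$ the explicit constant stated there. The second term is controlled by estimate~\eqref{it:B-xa} of Proposition~\ref{pro:old-aast}, which gives $\norm{B(\xaast - \bar x)}{X} \leq E \frac{\delta}{\sqrt{\aast}}$. Adding these and setting $\tilde C := C + E$ then yields the asserted bound.

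The decisive work has in effect been pre-packaged into Propositions~\ref{pro:tikhonov} and~\ref{pro:old-aast}, so the present step is essentially the bookkeeping that combines them; I would not expect a genuine obstacle. The only point requiring attention is the smallness threshold on $\delta$: Proposition~\ref{pro:tikhonov} holds only for $\delta$ small enough, in particular so that $\xaast \in \domain$ and $\xadast$ is a legitimate minimizer admitting the stated penalty bound. Since the hypothesis of the lemma already restricts to sufficiently small $\delta$, both invoked estimates are simultaneously available and the argument closes without further assumptions.
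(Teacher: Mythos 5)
Your argument is correct and is essentially identical to the paper's own proof: the same triangle inequality through the reference element $\bar x$, the same appeal to~\eqref{it:f-bound2} and~\eqref{it:B-xa}, and the same constant $\tilde C = C + E$. Nothing further is needed.
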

\begin{proof}
  By using the triangle inequality we find that
  $$
  \norm{B(\xadast - \xaast)}{X} \leq \norm{B(\xaast - \bar x)}{X} +
  \norm{B(\xadast - \bar x)}{X}
$$
The first summand on the right was bounded in~(\ref{it:B-xa}), and the
second was bounded in Proposition~\ref{pro:tikhonov} for sufficiently
small $\delta>0$. This yields the assertion with~$\tilde C:= C + E$,
where~$C$ is the constant from Proposition~\ref{pro:tikhonov}.  \qed
\end{proof}
This allows for the following result.
\begin{proposition}\label{pro:barE}
  There is a constant~$\bar E$ such that we have for~$\aast$ as in
  Assumption~\ref{ass:parameter} and for sufficiently small $\delta>0$
  that
  $$
  \norm{\xadast - \xaast}{p} \leq \bar E.
  $$
\end{proposition}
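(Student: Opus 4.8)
The plan is to obtain the bound in the intermediate norm $\norm{\cdot}{p}$ by \emph{interpolating} between a weak $(-a)$-norm estimate and the strong $1$-norm estimate that Lemma~\ref{lem:B-bound} already provides. Since $0<p<1$ and $a>0$, the exponent $p$ lies strictly between $-a$ and $1$, so the interpolation inequality of the Hilbert scale $\{X_\tau\}_{\tau\in\mathbb R}$ is applicable with these endpoints.

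First I would establish a bound on $\norm{\xadast - \xaast}{-a}$. By the triangle inequality,
$$
\norm{\xadast - \xaast}{-a} \leq \norm{\xadast - \xp}{-a} + \norm{\xp - \xaast}{-a}.
$$
The first term is bounded by $K\delta$ thanks to Proposition~\ref{pro:-a-bound}, and the second by $E\delta$ thanks to the estimate~(\ref{it:xa-xp-a}) of Proposition~\ref{pro:old-aast} (which reads $\norm{\xaast - \xp}{-a}\le E\delta$). Hence $\norm{\xadast - \xaast}{-a} \leq (K+E)\,\delta$ for sufficiently small $\delta>0$. On the other hand, Lemma~\ref{lem:B-bound} gives the strong estimate $\norm{\xadast - \xaast}{1} = \norm{B(\xadast - \xaast)}{X} \leq \tilde C\,\frac{\delta}{\sqrt{\aast}}$.

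Next I would apply interpolation. Writing $p$ as the convex combination $p = \frac{a+p}{a+1}\cdot 1 + \frac{1-p}{a+1}\cdot(-a)$, the interpolation inequality yields
$$
\norm{\xadast - \xaast}{p} \leq \norm{\xadast - \xaast}{1}^{\frac{a+p}{a+1}}\,\norm{\xadast - \xaast}{-a}^{\frac{1-p}{a+1}},
$$
where the two exponents sum to $1$. Substituting the two estimates above and using the identity $\frac{\delta}{\sqrt{\aast}} = \delta^{(p-1)/(a+p)}$, the power of $\delta$ coming from the strong term is $\frac{p-1}{a+p}\cdot\frac{a+p}{a+1} = \frac{p-1}{a+1}$, while the weak term contributes $\delta^{(1-p)/(a+1)}$; these cancel exactly. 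The result is the $\delta$-independent constant
$$
\norm{\xadast - \xaast}{p} \leq \tilde C^{\,\frac{a+p}{a+1}}\,(K+E)^{\frac{1-p}{a+1}} =: \bar E.
$$

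Analytically the argument is short, so the "hard part" is conceptual rather than technical: it is the verification that the powers of $\delta$ cancel. This cancellation is not a coincidence but is built into the a priori choice $\aast = \delta^{2(a+1)/(a+p)}$ of Assumption~\ref{ass:parameter}, which is calibrated precisely so that interpolating the (diverging) strong bound against the (vanishing) weak bound produces a bounded $p$-norm. The main point to get right is therefore the bookkeeping of the exponents $\tfrac{a+p}{a+1}$ and $\tfrac{1-p}{a+1}$ together with the exponent in $\frac{\delta}{\sqrt{\aast}}$, and to confirm that $p$ indeed lies in the admissible interpolation range $(-a,1)$ so that the Hilbert scale inequality may be invoked.
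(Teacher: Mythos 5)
Your proposal is correct and follows essentially the same route as the paper: the identical interpolation inequality between the $1$-norm (from Lemma~\ref{lem:B-bound}) and the $(-a)$-norm (via the same triangle-inequality decomposition through $\xp$, using Proposition~\ref{pro:-a-bound} and the bound~(\ref{it:xa-xp-a})), followed by the same exponent cancellation forced by the a priori choice of $\aast$. The only difference is cosmetic: you make the constant $\tilde C^{\,\frac{a+p}{a+1}}(K+E)^{\frac{1-p}{a+1}}$ explicit, whereas the paper absorbs it into an unspecified $\bar E$.
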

\begin{proof}
  Again, we use the interpolation inequality, now in the form of
  $$
  \norm{\xadast - \xaast}{p} \leq \norm{\xadast - \xaast}{1}^{\frac{a
      + p}{a + 1}} \norm{\xadast - \xaast}{-a} ^{\frac{1 - p}{a + 1}}.
$$
The norm in the first factor was bounded in
Lemma~\ref{lem:B-bound}. The norm in the second factor can be bounded
from
\begin{equation} \label{eq:addlast} \norm{\xadast - \xaast}{-a} \leq
  \norm{\xadast - \xp}{-a} + \norm{\xaast - \xp}{-a}.
\end{equation}
Now we discuss both summands in the right-hand side of the inequality
\eqref{eq:addlast}. The first summand was bounded in
Proposition~\ref{pro:-a-bound} by a multiple of~$\delta$, and the same
holds true for the second summand by virtue of~(\ref{it:xa-xp-a}).
Therefore, there is a constant~$\bar E$ such that
$$
\norm{\xadast - \xaast}{p} \leq \bar E
\lr{\frac{\delta}{\sqrt{\aast}}}^{^{\frac{a + p}{a + 1}}}
\delta^{\frac{1 - p}{a + 1}} = \bar E\, \delta\, \aast^{- \frac{a + p
  }{2(a + 1)}}= \bar E.
$$
This completes the proof of Proposition~\ref{pro:barE}.
\end{proof}

We have gathered all auxiliary estimates in order to turn to the final proof.

\smallskip
{\parindent0em \sl Proof of Proposition~\ref{pro:p-bound}}. The
estimate \eqref{eq:p-bound} 
is an
immediate consequence of Proposition~\ref{pro:barE} and of the second
bound from~(\ref{eq:xa-p-norm}), overall yielding the
constant \linebreak $\tilde E := \bar E + E$. This completes the proof.\qed

\section{Conclusions}
\label{sec:conclude}

We have shown that under the non-linearity Assumption~\ref{ass:nonlinearity} and
the solution smoothness given as in Assumption~\ref{ass:smooth} the a priori regularization parameter choice
$$
\alpha_*=\aast(\delta)= \delta^{\frac{2(a+1)}{a +
    p}}=\delta^{2-\frac{2(p-1)}{a+p}}
$$
allows for the order optimal convergence rate \eqref{eq:rate} for all
$0<p<1$.
The obtained rate from Theorem~\ref{thm:main} is valid for~$0
  < p \leq a + 1$ when using the same a priori choice of the regularization parameter
  from Assumption~\ref{ass:parameter}. In all cases, we have
  that~$\aast(\delta) \to 0$ as $\delta \to 0$.    In the regular case
  with~$1 < p \leq a + 1$ we also find the usual
  convergence~$\frac{\delta^2}{\alpha(\delta)} \to 0$ as $\delta \to
  0$. For the borderline
case $p=1$ the quotient $\frac{\delta^2}{\aast(\delta)}$ is constant.
However, in the oversmoothing case~$0 < p < 1$, as considered here, we
find that~$\frac{\delta^2}{\aast(\delta)} \to \infty$ as $\delta \to
0$.

We stress another observation. In the regular case with~$p>1$
  we have the {\it convergence
  property}
\begin{equation} \label{eq:confin} \lim \limits_{\delta \to 0}
  \|\xadast-\xp\|_X=0\,,
\end{equation}
which is a consequence of the sequential weak
compactness of a ball and of the Kadec-Klee property in the Hilbert
space $X$. This cannot be shown for $0<p<1$ without using additional smoothness
of the solution $\xp$. Hence, the convergence property
\eqref{eq:confin} as an implication of Theorem~\ref{thm:main} is
essentially based on the existence of a positive value $p$ in
\eqref{eq:mpe} expressing solution smoothness.

\section*{Acknowledgment}
The research was financially supported by Deutsche
Forschungsgemeinschaft (DFG-grant HO 1454/12-1) and by the Weierstrass
Institute for Applied Analysis and Stochastics, Berlin. 

%
\providecommand{\bysame}{\leavevmode\hbox to3em{\hrulefill}\thinspace}
\providecommand{\MR}{\relax\ifhmode\unskip\space\fi MR }
\providecommand{\MRhref}[2]{%
  \href{http://www.ams.org/mathscinet-getitem?mr=#1}{#2}
}
\providecommand{\href}[2]{#2}

\end{document}